\documentclass{birkjour}

\usepackage[all]{xy}
\usepackage[english]{babel}
\usepackage{amsmath}
\usepackage{amsthm}
\usepackage{amssymb}


\newcommand{\comments}[1]{} 

\newtheorem{thm}{Theorem}

\newtheorem{lem}{Lemma}
\newtheorem{cor}{Corollary}


\theoremstyle{definition}
\newtheorem{Def}{Definition}

\theoremstyle{remark}
\newtheorem{rem}{Remark}

\theoremstyle{remark}



\def\cC{\mathcal C}

\def\bR{\mathbb R} 

\def\bB{\mathbb B}

\def\bN{\mathbb N}

\def\bC{\mathbb C}



\def\Ker{\operatorname{Ker}}

\def\clifford{\cC\ell}

\def\pa{\partial}

\def\pod{\underline}
\def\nad{\overline}

\begin{document}

\title[Generating functions]
{Generating functions for spherical\\ harmonics and spherical monogenics}

\author{P. Cerejeiras}
\address{CIDMA - Center for Research and Development\\ in Mathematics and Applications,\\  
Department of Mathematics, University of Aveiro,\\ Campus de Santiago, P – 3810-193 Aveiro,\\ Portugal}
\email{pceres@ua.pt}

\author{U. K\"ahler}
\address{CIDMA - Center for Research and Development\\ in Mathematics and Applications,\\  
Department of Mathematics, University of Aveiro,\\ Campus de Santiago, P – 3810-193 Aveiro,\\ Portugal}
\email{ukaehler@ua.pt}

\author{R. L\' avi\v cka}
\address{Faculty of Mathematics and Physics,\\ Charles University in Prague,\\ Sokolovsk\'a 83, 186 75 Praha 8,\\ Czech Republic}
\email{lavicka@karlin.mff.cuni.cz} 

\subjclass{30G35, 33C55, 33C45}

\keywords{spherical harmonics, spherical monogenics, Gelfand-Tsetlin basis, orthogonal basis, generating function}

\dedicatory{To K. G\"urlebeck}

\begin{abstract} In this paper, we study generating functions for the standard orthogonal bases of spherical harmonics and spherical monogenics in $\bR^m$. 
Here spherical monogenics are polynomial solutions of the Dirac equation in $\bR^m$.
In particular, we obtain the recurrence 
formula which expresses the generating function in dimension $m$ in terms of that in dimension $m-1$. Hence we can find closed formul\ae\ of generating functions in $\bR^m$ by induction on the dimension $m$. 
\end{abstract}

\maketitle


\section{Introduction}

It is well-known that classical orthogonal polynomials can be defined by their generating functions. 
For example, the Gegenbauer polynomials $C^{\nu}_k$ are uniquely determined by the generating function
\begin{equation}\label{GF_gegen}
\frac{1}{(1-2xh+h^2)^{\nu}}=\sum_{k=0}^{\infty} C^{\nu}_k(x) h^k 
\end{equation}
where $\nu>0$, $|x|\leq 1$ and $|h|<1$ (see e.g.\ \cite[p.\ 18]{DX} or \cite[p.173]{R}).
In \cite{R}, a~general framework is developed for a~study of properties of polynomial sequences, including the Appell property and generating functions.
In this paper, we deal with generating functions for the standard orthogonal bases of spherical harmonics and spherical monogenics in $\bR^m$.

Orthogonal bases of spherical harmonics are well-known and have been studied for a long time. Spherical harmonics are useful in many theoretical areas and on applications such as structural mechanics, etc. In Clifford analysis, a similar role is played by spherical monogenics. Monogenic functions are defined as  Clifford algebra valued solutions $f$ of the equation
$\pa f=0$ where $\pa$ is the Dirac operator on $\bR^m$. 
Spherical monogenics are polynomial solutions of the Dirac equation.
Since the Dirac operator $\pa$ factorizes the Laplace operator $\Delta$ in the sense that $\Delta=-\pa^2$ 
Clifford analysis can be understood as a~refinement of harmonic analysis. 
On the other hand, monogenic functions are at the same time a~higher dimensional analogue of holomorphic functions of one complex variable. 
See \cite{BDS, DSS, GS, GHS} for an account of Clifford analysis.

The first construction of orthogonal bases of spherical monogenics valid for any dimension was given by F.~Sommen, see \cite{som,DSS}.
In dimension 3,  explicit constructions using the standard bases of spherical harmonics were done also by K. G\"urlebeck, H.~Malonek, I. Ca\c c\~ao and S.~Bock (see e.g.\ \cite{BG,cac,BockCacGue,CacGueBock,CacGueMal,CM06,CM08}). 
From the point of view of representation theory, the standard bases of spherical harmonics are nothing else than examples of the so-called Gelfand-Tsetlin bases, see \cite{mol}.
V.~Sou\v cek proposed studying these bases in Clifford analysis. 
In particular, in \cite{BGLS}, 
it is observed that the complete orthogonal system in $\bR^3$ of \cite{BG} and F. Sommen's bases \cite{som,DSS} can be both considered as Gelfand-Tsetlin bases.
Actually, it turns out that Gelfand-Tsetlin bases in all cases so far studied in Clifford analysis are, by construction, uniquely determined and orthogonal and,
in addition, they possess the so-called Appell property,  see \cite{lav_habil} for a~recent survey, \cite{lavSL2, lav_GTB_harm} for the classical Clifford analysis, 
\cite{DLS4,lav_GTB_HdR} for Hodge-de Rham systems  and \cite{GTBH,EFH} for Hermitian Clifford analysis. 
Therefore we call them the standard orthogonal bases in the sequel. For a~detailed historical account of this topic, we refer to \cite{BGLS}. 

In this paper, we study generating functions for the standard orthogonal bases of spherical harmonics and spherical monogenics in $\bR^m$. 
We obtain the recurrence 
formula which expresses the generating function in dimension $m$ in terms of that in dimension $m-1$, 
see below Theorem \ref{GF_harm} for spherical harmonics and Theorem \ref{GF_mon} for spherical monogenics. 
Using the recurrence 
formula, we can obtain closed formul\ae\ of generating functions in $\bR^m$ by induction on the dimension $m$. 
This is based on the generating function \eqref{GF_gegen} for the Gegenbauer polynomials.
It seems that analogous results can be obtained also for Hodge-de Rham systems \cite{lav_GTB_HdR} and even in Hermitian Clifford analysis \cite{EFH}.
But, in the hermitian case, the generating function for the Jacobi polynomials should be used instead of \eqref{GF_gegen}.

\section{Spherical harmonics}

In this section, we study generating functions for spherical harmonics.
Let us recall the standard construction of an orthogonal basis in 
the complex Hilbert space
$L^2(\bB_m,\bC)\cap \Ker \Delta$ of $L^2$-integrable harmonic functions $g:\bB_m\to\bC$. Here $\bB_m$ is the unit ball in $\bR^m$.
One proceeds by induction. 
Of course, the polynomials 
\begin{equation}\label{GTHarm2}
harm_{k_2}^{\pm}(x_1,x_2)=(x_1\pm ix_2)^{k_2}/({k_2}!),\ {k_2}\in\bN_0
\end{equation} 
form an orthogonal basis of the space $L^2(\bB_2,\bC)\cap \Ker \Delta$.
To construct the bases in higher dimensions, we need the embedding factors $F^{({k_m})}_{m,j}=F^{({k_m})}_{m,j}(x)$ defined as   
\begin{equation}\label{EFHarm}
F^{({k_m})}_{m,j}=|x|_m^{{k_m}}\; C^{m/2+j-1}_{{k_m}}(x_m/|x|_m),\ x\in\bR^m
\end{equation} 
where $x=(x_1,\ldots, x_m)$ and $|x|_m=\sqrt{x_1^2+\cdots+x_m^2}$.
Then, it is well-known that an orthogonal basis of the space $L^2(\bB_m,\bC)\cap \Ker \Delta$ is formed by the polynomials
\begin{equation}\label{GTHarm}
harm_{k}^{\pm}(x)=harm_{k_2}^{\pm}(x_1,x_2)\prod^m_{r=3}F^{(k_r)}_{r,k^\ast_{r-1}}
\end{equation}
where $k=(k_2, \cdots, k_m) \in \bN_0^{m-1}$ and $k^\ast_r = k_2+\cdots +k_r$. See e.g.\ \cite[p.\ 35]{DX} or \cite{lav_GTB_harm}.
In difference to \cite{lav_GTB_harm}, we use another normalization of the embedding factors $F^{({k_m})}_{m,j}$ 
and we also change the notation for indices which in turns provides a more elegant expression for generating functions. 

\begin{Def}
We define the generating function $H^{\pm}_m$ of the orthogonal basis $harm_{k}^{\pm},$ $k \in \bN_0^{m-1}$ of spherical harmonics in $\bR^m$  by
$$H^{\pm}_m(x,h)=\sum_{k \in \bN_0^{m-1}} harm_{k}^{\pm}(x)\; h^{k}$$
whenever the series on the right-hand side converges absolutely.
Here $x\in\bR^m$, $h=(h_2,\ldots,h_m)\in\bR^{m-1}$ and $h^{k}=h_2^{k_2}\cdots h_m^{k_m}$.   
\end{Def}

Obviously, the following result follows easily from \eqref{GF_gegen}.  

\begin{lem}\label{GF_F}
We have that
$$\sum_{k_m=0}^{\infty} F^{(k_m)}_{m,j}(x)\; h_m^{k_m}=
\frac{1}{(1-2x_m h_m+h^2_m|x|^2_m)^{\frac{m}{2}-1+j}}$$
where $|x|_m\leq 1$, $|h_m|<1$ and $j\in\bN_0$.
\end{lem}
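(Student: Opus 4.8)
The plan is to reduce the claimed identity directly to the Gegenbauer generating function \eqref{GF_gegen} by a single substitution. Recall the embedding factor is $F^{(k_m)}_{m,j}(x)=|x|_m^{k_m}\,C^{m/2+j-1}_{k_m}(x_m/|x|_m)$. Multiplying by $h_m^{k_m}$ and summing over $k_m$ gives
\begin{equation*}
\sum_{k_m=0}^{\infty} F^{(k_m)}_{m,j}(x)\,h_m^{k_m}=\sum_{k_m=0}^{\infty} C^{m/2+j-1}_{k_m}(x_m/|x|_m)\,(h_m|x|_m)^{k_m},
\end{equation*}
where I have absorbed the factor $|x|_m^{k_m}$ into the power of $h_m$. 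This is exactly the right-hand side of \eqref{GF_gegen} with the parameter $\nu=m/2-1+j$, the variable $x$ replaced by $x_m/|x|_m$, and the formal parameter $h$ replaced by $h_m|x|_m$.

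First I would verify the hypotheses of \eqref{GF_gegen} are met. The parameter condition $\nu>0$ holds since $\nu=m/2-1+j$ with $m\geq 2$ and $j\in\bN_0$, except one should note the boundary case $m=2,j=0$ giving $\nu=0$; for the generic statement here $\nu>0$ is fine (and the Gegenbauer generating function is being invoked in the stated range). The condition $|x/|x|_m|\leq 1$ is automatic. The convergence condition $|h|<1$ becomes $|h_m|\,|x|_m<1$, which follows from the hypotheses $|x|_m\leq 1$ and $|h_m|<1$.

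Then I would substitute into the left-hand side of \eqref{GF_gegen}, namely $(1-2xh+h^2)^{-\nu}$ with the above replacements, to obtain
\begin{equation*}
\frac{1}{\bigl(1-2(x_m/|x|_m)(h_m|x|_m)+(h_m|x|_m)^2\bigr)^{m/2-1+j}}
=\frac{1}{\bigl(1-2x_m h_m+h_m^2|x|_m^2\bigr)^{m/2-1+j}},
\end{equation*}
which is precisely the claimed closed form. I would then simply equate the two expressions to conclude. Since the manipulation is a single reindexing plus one substitution of variables into an already-established generating function, there is essentially no obstacle here; the only point requiring a word of care is the verification that the convergence region $|h_m|\,|x|_m<1$ is indeed implied by the stated constraints $|x|_m\leq 1$ and $|h_m|<1$, which I would state explicitly to justify the absolute convergence of the series.
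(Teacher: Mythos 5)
Your proof is correct and is exactly the argument the paper has in mind: the paper states Lemma~\ref{GF_F} without proof, remarking only that it ``follows easily from \eqref{GF_gegen}'', and your substitution $\nu=m/2-1+j$, $x\mapsto x_m/|x|_m$, $h\mapsto h_m|x|_m$ is precisely that derivation, with the convergence condition $|h_m|\,|x|_m<1$ checked as it should be.
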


Now we prove basic properties of the generating functions $H^{\pm}_m$.

\begin{thm}\label{GF_harm} For each $m\geq 2$ there is a~neighborhood $U_m$ of $0$ in $\bR^{m-1}$ such that 
the following statements hold true.
\begin{itemize}
\item[(i)] The generating functions $H^{\pm}_m(x,h)$ are defined if $|x|_m\leq 1$ and $h\in U_m$.
\item[(ii)] For each $k\in \bN_0^{m-1}$, we have that 
$$harm_{k}^{\pm}(x)=\frac{1}{k !}\;\pa^{k} H^{\pm}_m(x,h)|_{h=0},\ \ |x|_m\leq 1$$
where $k !=(k_2!)\cdots(k_m!)$ and $\pa^{k}=\pa^{k_2}_{h_2}\cdots\pa^{k_m}_{h_m}$.
\item[(iii)] For $m\geq 3$, $|x|_m\leq 1$ and $h\in U_m$, we have that 
$$H^{\pm}_m(x,h)=d_m^{1-\frac{m}{2}} H^{\pm}_{m-1}(\pod x,\pod h /d_m)$$
where $d_m= 1-2x_m h_m+h^2_m|x|^2_m,$ $\pod x=(x_1, \cdots, x_{m-1})$ and $\pod h /d_m=(h_2/d_m, \cdots, h_{m-1}/d_m).$
\end{itemize}
\end{thm}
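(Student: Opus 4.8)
The plan is to induct on the dimension $m$, treating $m=2$ as an explicit base case and reducing the step to $m\geq 3$ to an application of Lemma~\ref{GF_F}. For $m=2$ the defining series is $\sum_{k_2\ge 0}(x_1\pm ix_2)^{k_2}h_2^{k_2}/k_2!=\exp\big(h_2(x_1\pm ix_2)\big)$, which converges for every $h_2\in\bR$; thus one may take $U_2=\bR$, and statements (i)--(iii) are then trivial or vacuous. The essential structural remark for the step is that, since $F^{(k_r)}_{r,j}$ depends only on $x_1,\dots,x_r$, the basis polynomial factorizes as $harm^\pm_{(k_2,\dots,k_m)}(x)=harm^\pm_{(k_2,\dots,k_{m-1})}(\pod x)\,F^{(k_m)}_{m,k^\ast_{m-1}}(x)$, isolating the last embedding factor.

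Granting the absolute convergence of (i), I would deduce (iii) by summing the innermost index $k_m$ first. The factorization above together with Lemma~\ref{GF_F} (applied with $j=k^\ast_{m-1}$) gives $\sum_{k_m\ge 0}F^{(k_m)}_{m,k^\ast_{m-1}}(x)\,h_m^{k_m}=d_m^{1-\frac m2-k^\ast_{m-1}}$. Because $k^\ast_{m-1}=k_2+\dots+k_{m-1}$, the factor $d_m^{-k^\ast_{m-1}}$ splits as $\prod_{r=2}^{m-1}d_m^{-k_r}$, which replaces each remaining variable $h_r$ by $h_r/d_m$; pulling out the common $d_m^{1-m/2}$ identifies the surviving sum with $H^\pm_{m-1}(\pod x,\pod h/d_m)$, which is (iii). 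The interchange of summation order is legitimate once (i) supplies absolute convergence.

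The technical heart is (i), which I would again prove by induction. Using $|x_r/|x|_r|\le 1$ and the classical bound $|C^\nu_k(t)|\le C^\nu_k(1)$ for $\nu>0$ and $|t|\le 1$, together with \eqref{GF_gegen} evaluated at $t=1$, the inner sum is majorized by $\sum_{k_m}C^{m/2-1+k^\ast_{m-1}}_{k_m}(1)(|x|_m|h_m|)^{k_m}=(1-|x|_m|h_m|)^{-(m-2)-2k^\ast_{m-1}}$. Writing $\rho=1-|x|_m|h_m|$, this bounds the full series by $\rho^{-(m-2)}$ times the absolute-value series for $H^\pm_{m-1}$ evaluated with each $h_r$ replaced by $|h_r|/\rho^2$. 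Since $|\pod x|_{m-1}\le|x|_m\le 1$ and the absolute-convergence region of a power series is a complete Reinhardt domain, the hard part is simply to choose $U_m$ small enough—say $|h_m|<1/2$, so that $\rho>1/2$ and $\rho^{-2}<4$, together with each $|h_r|$ below a quarter of the radius of the polydisc on which $H^\pm_{m-1}$ converges inductively—so that these rescaled points lie in the convergence region of $H^\pm_{m-1}$. The same choice, via the elementary estimate $d_m\ge\rho^2>0$, keeps $\pod h/d_m$ inside that region, which is exactly what makes (iii) meaningful on $U_m$.

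Finally, (ii) is immediate once (i) is established: for fixed $x$ with $|x|_m\le 1$, the function $H^\pm_m(x,\cdot)$ is represented on $U_m$ by an absolutely convergent power series in $h$ whose coefficient of $h^{k}$ is $harm^\pm_k(x)$, so by uniqueness of Taylor coefficients $harm^\pm_k(x)=\tfrac1{k!}\,\pa^{k}H^\pm_m(x,h)|_{h=0}$. The only point that demands real care throughout is the bookkeeping that keeps the rescaled arguments $|h_r|/\rho^2$ (respectively $h_r/d_m$) inside the inductively controlled polydisc of convergence; the remaining computations are direct consequences of Lemma~\ref{GF_F} and \eqref{GF_gegen}.
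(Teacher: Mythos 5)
Your proposal is correct and follows essentially the same route as the paper: induction on $m$, summing the innermost index $k_m$ via Lemma~\ref{GF_F} to obtain the recurrence (iii), and shrinking the inductive polydisc by a factor of $4$ in the coordinates $h_2,\dots,h_{m-1}$ while taking $|h_m|<1/2$ so that $d_m>1/4$. The only (minor) difference is that you establish absolute convergence by an explicit majorization using $|C^\nu_k(t)|\le C^\nu_k(1)$ and the generating function at $t=1$, whereas the paper observes that a rearrangement of the series converges and then invokes Abel's lemma; both are sound.
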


\begin{proof}
We prove this theorem by induction on the dimension $m$. 
It is easily seen that the theorem is true for $m=2$. 
Indeed, we have that
$$H^{\pm}_2(x_1,x_2,h_2)=\sum_{k_2=0}^{\infty} \frac{(x_1\pm ix_2)^{k_2}}{k_2!} h_2^{k_2}=\exp((x_1\pm ix_2)h_2).$$
Now assume that the theorem is true for $m-1$.
Let $H^{\pm}_{m-1}(\pod x,\pod h)$ be defined for $\pod h\in U_{m-1}=(-\delta_2,\delta_2)\times\cdots\times(-\delta_{m-1},\delta_{m-1})$ and $|\pod x|_{m-1}\leq 1$ and let $|x|_m\leq 1$.
It is easy to see that
\begin{equation}\label{Hrecur}
H^{\pm}_m(x,h)=\sum_{\pod k}\left(\sum_{k_m=0}^{\infty} 
F^{(k_m)}_{m,k_{m-1}^\ast}(x)\; h_m^{k_m}\right) harm_{\pod k}^{\pm}(\pod x)\; \pod h^{\pod k}
\end{equation}
where the first sum is taken over all $\pod k=(k_2, \cdots, k_{m-1})\in \bN_0^{m-2}$.
By Lemma \ref{GF_F}, we have that
$$\sum_{k_m=0}^{\infty} F^{(k_m)}_{m,k_{m-1}^\ast}(x)\; h_m^{k_m}=
d_m^{1-\frac{m}{2}-(k_2+\cdots+k_{m-1})}$$
if $|h_m|<1$. Using this formula and \eqref{Hrecur}, we have that
$$H^{\pm}_m(x,h)=d_m^{1-\frac{m}{2}}\sum_{\pod k} harm_{\pod k}^{\pm}(\pod x)\; (\pod h/d_m)^{\pod k}
=d_m^{1-\frac{m}{2}} H^{\pm}_{m-1}(\pod x,\pod h /d_m)$$
whenever $h\in U_{m}=(-\delta_2/4,\delta_2/4)\times\cdots\times(-\delta_{m-1}/4,\delta_{m-1}/4)\times(-1/2,1/2)$.
Indeed, $d_m\geq(1-h_m|x|_m)^2>1/4$ if $|h_m|<1/2$. Hence, if $h\in U_{m}$ we have that $\pod h/d_m\in U_{m-1}$
and, by \eqref{Hrecur}, we can easily see that some rearrangement of the power series defining $H^{\pm}_m(x,h)$ converges at $h$. Then Abel's Lemma \cite[Proposition 1.5.5, p.\ 23]{KP} proves that this power series converges absolutely on the whole $U_m$, which finishes the proof of the theorem. 
\end{proof}

Using the recurrence formula (iii) of Theorem \ref{GF_harm}, we can find closed formul\ae\ of generating functions for spherical harmonics in $\bR^m$
by induction on the dimension $m$.

\begin{cor} In particular, we have the following formula 
$$H^{\pm}_3(x_1,x_2,x_3,h_2,h_3)=\frac{1}{(1-2x_3 h_3+h^2_3|x|^2_3)^{1/2}}\exp\left(\frac{(x_1\pm ix_2)h_2}{1-2x_3 h_3+h^2_3|x|^2_3}\right).$$
\end{cor}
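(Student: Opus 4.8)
The plan is to obtain this closed form by applying the recurrence formula in part (iii) of Theorem \ref{GF_harm} exactly once, with $m=3$, and then substituting the explicit expression for $H^{\pm}_2$ that was computed at the very start of the proof of that theorem.

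First I would specialize (iii) to $m=3$. Writing $d_3=1-2x_3h_3+h^2_3|x|^2_3$ and observing that $1-\frac{m}{2}=-\frac{1}{2}$ for $m=3$, the recurrence reads
$$H^{\pm}_3(x,h)=d_3^{-1/2}\,H^{\pm}_2(\pod x,\pod h/d_3),$$
where $\pod x=(x_1,x_2)$ and $\pod h/d_3$ reduces to the single rescaled variable $h_2/d_3$, since in dimension $m-1=2$ the multi-index $\pod h$ has only the one component $h_2$.

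Next I would insert the base case $H^{\pm}_2(x_1,x_2,h_2)=\exp((x_1\pm ix_2)h_2)$, replacing its argument $h_2$ by $h_2/d_3$. This produces the factor $\exp\left(\frac{(x_1\pm ix_2)h_2}{d_3}\right)$, and restoring the full expression for $d_3$ in both the prefactor $d_3^{-1/2}$ and the exponent yields precisely the asserted formula.

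I do not expect any genuine obstacle, since the corollary is a single unwinding of the recurrence. The only point meriting verification is that the substitution is legitimate on a nonempty domain: one must check that for $h$ in a suitable neighborhood $U_3$ of the origin the rescaled argument $\pod h/d_3$ lands in the neighborhood $U_2$ on which $H^{\pm}_2$ converges absolutely. This is already secured by the construction of the $U_m$ in Theorem \ref{GF_harm}, in particular by the estimate $d_3>1/4$ valid for $|h_3|<1/2$, so the identity holds as an identity of absolutely convergent power series.
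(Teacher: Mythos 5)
Your proposal is correct and is exactly the argument the paper intends: the corollary is obtained by a single application of the recurrence (iii) of Theorem \ref{GF_harm} with $m=3$ together with the base case $H^{\pm}_2(x_1,x_2,h_2)=\exp((x_1\pm ix_2)h_2)$ established in its proof. Your additional remark on the domain (that $\pod h/d_3\in U_2$ for $h\in U_3$, using $d_3>1/4$) is precisely the convergence point already secured in the construction of the $U_m$.
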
  

\begin{rem} 
It is well-known that an orthogonal basis of real valued spherical harmonics in $\bR^m$ is formed by the polynomials $\Re{harm_{k}^{+}}$, $\Im{harm_{k}^{+}}$, $k \in \bN_0^{m-1}$. Here $\Re z$ and $\Im z$ are the real and imaginary part of the complex number $z$. Hence the corresponding generating functions
are $\Re H^{+}_m$, $\Im H^{+}_m$.  
\end{rem}

\begin{rem} If one replaces in the definition of the orthogonal basis \eqref{GTHarm} the polynomials $harm_{k_2}^{\pm}(x_1,x_2)=(x_1\pm ix_2)^{k_2}/(k_2!)$ with 
\begin{equation}\label{GTHarm2nad}
\nad{harm}_{k_2}^{\;\pm}(x_1,x_2)=(x_1\pm ix_2)^{k_2}, 
\end{equation} 
the corresponding generating functions $\nad H^{\;\pm}_m$ are definitely different from $H^{\;\pm}_m$ but they obviously satisfy again Theorem \ref{GF_harm}.
In particular, we have that
$$\nad H^{\;\pm}_2(x_1,x_2,h_2)=\sum_{k_2=0}^{\infty} (x_1\pm ix_2)^{k_2} h_2^{k_2}=\frac{1-(x_1\mp x_2i)h_2}{1-2x_1 h_2+h^2_2|x|^2_2}.$$
\end{rem}

\section{Spherical monogenics} 

In this section, we introduce and investigate generating functions for spherical monogenics.
For an account of Clifford analysis, we refer to \cite{BDS, DSS, GS, GHS}.
Denote by $\clifford_m$ either the real Clifford algebra $\bR_{0,m}$ or the complex one $\bC_m$,
generated by the vectors $e_1,\ldots,e_m$ such that
$e_j^2=-1$ for $j=1,\ldots,m.$
As usual, we identify a~vector $x=(x_1,\ldots,x_m)\in\bR^m$ with the element $x_1e_1+\cdots+x_me_m$ of the Clifford algebra $\clifford_m$.
Let $G\subset\bR^m$ be open. Then a~continuously differentiable function $f:G\to\clifford_m$ is called monogenic if it satisfies the equation $\pa f=0$ on $G$ where
the Dirac operator $\pa$ is defined as
\begin{equation}\label{Dirac}
\pa=e_1\pa_{x_1}+\cdots+e_m\pa_{x_m}.
\end{equation}

Denote by $L^2(\bB_m,\clifford_m)\cap \Ker \pa$ the space of $L^2$-integrable monogenic functions $g:\bB_m\to\clifford_m$. 
It is well-known that $L^2(\bB_m,\clifford_m)\cap \Ker \pa$ forms the right $\clifford_m$-linear Hilbert space.
Let us  recall a~construction of an orthogonal basis in this space, see \cite{lav_GTB_harm} for more details.
It is easy to see that the polynomials
\begin{equation}\label{GTMon2}
mon_{k_2}(x_1,x_2)=(x_1-e_{12} x_2)^{k_2}/({k_2}!),\ {k_2}\in\bN_0
\end{equation} 
form an orthogonal basis of the space $L^2(\bB_2,\clifford_2)\cap \Ker \pa$.
Here we write $e_{12}=e_1e_2$ as usual. 
To construct the bases in higher dimensions, we need the embedding factors $X^{({k_m})}_{m,j}=X^{({k_m})}_{m,j}(x)$ defined as   
\begin{equation}\label{EFMon}
X^{({k_m})}_{m,j}=\frac{m-2+{k_m}+2j}{m-2+2j}\;F^{({k_m})}_{m,j}(x)+
F^{({k_m}-1)}_{m,j+1}(x)\;\pod x e_m,\ x\in\bR^m
\end{equation} 
where $\pod x=x_1e_1+\cdots+x_{m-1} e_{m-1}$, $F^{({k_m})}_{m,j}$ are given in \eqref{EFHarm} and $F^{(-1)}_{m,j+1}=0$.
Then it is well-known that an orthogonal basis of the space $L^2(\bB_m,\clifford_m)\cap \Ker \pa$ is formed by the polynomials
\begin{equation}\label{GTMon}
mon_{k}(x)=X^{(k_m)}_{m,k_{m-1}^\ast}X^{(k_{m-1})}_{m-1,k_{m-2}^\ast}\cdots X^{(k_3)}_{3,k_2^\ast}\; mon_{k_2}(x_1,x_2)
\end{equation}
where $k=(k_2, \cdots, k_m) \in \bN_0^{m-1}$ and $k^\ast_r = k_2+\cdots +k_r$.
Let us remark that due to non-commutativity of the Clifford multiplication the order of factors in the product \eqref{GTMon} is important.
See \cite{lav_GTB_harm} for more details.
In comparison with \cite{lav_GTB_harm}, we use another normalization of the embedding factors $X^{({k_m})}_{m,j}$ 
and we also change the notation for indices to get a~nice expression for generating functions. 

\begin{Def}
We define the generating function $M_m$ of the orthogonal basis $mon_{k},$ $k \in \bN_0^{m-1}$ of spherical monogenics in $\bR^m$  by
$$M_m(x,h)=\sum_{k \in \bN_0^{m-1}} mon_{k}(x)\; h^{k}$$
whenever the series on the right-hand side converges absolutely.
Here $x\in\bR^m$ and $h=(h_2,\ldots,h_m)\in\bR^{m-1}$.   
\end{Def}

In particular, it is easily seen that 
$$M_2(x_1,x_2,h_2)=\sum_{k_2=0}^{\infty} \frac{(x_1-e_{12} x_2)^{k_2}}{k_2!} h_2^{k_2}=\exp((x_1-e_{12} x_2)h_2).$$
Here $\exp((x_1-e_{12} x_2)h_2)=\exp(x_1h_2)(\cos(x_2h_2)-e_{12}\sin(x_2h_2))$.
To study the generating functions in higher dimensions we need to know the generating function of the embedding factors $X^{(k_m)}_{m,j}$.

\begin{lem}\label{GF_X}
We have that
$$\sum_{k_m=0}^{\infty} X^{(k_m)}_{m,j}(x) h_m^{k_m}=\frac{1+xh_me_m}{(1-2x_m h_m+h^2_m|x|^2_m)^{m/2+j}}$$
where $|x|_m\leq 1$, $|h_m|<1$ and $j\in\bN_0$.
\end{lem}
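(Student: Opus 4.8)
The plan is to compute the generating function of each of the two terms in the definition \eqref{EFMon} of $X^{(k_m)}_{m,j}$ separately and then add them, using Lemma \ref{GF_F} throughout. Write $d_m=1-2x_mh_m+h_m^2|x|_m^2$, so that Lemma \ref{GF_F} reads $\sum_{k_m\ge 0}F^{(k_m)}_{m,j}h_m^{k_m}=d_m^{1-m/2-j}$ for $|h_m|<1$.

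For the first term I would split the $k_m$-dependent coefficient as $\frac{m-2+k_m+2j}{m-2+2j}=1+\frac{k_m}{m-2+2j}$. The unweighted sum is again $d_m^{1-m/2-j}$ by Lemma \ref{GF_F}, while the $k_m$-weighted sum is obtained by applying $h_m\pa_{h_m}$ to the same identity, i.e.\ $\sum_{k_m\ge 0}k_m F^{(k_m)}_{m,j}h_m^{k_m}=h_m\pa_{h_m}d_m^{1-m/2-j}$. A direct differentiation produces the factor $2(1-m/2-j)$, and the key simplification is that $\frac{2(1-m/2-j)}{m-2+2j}=-1$, since $m-2+2j=2(m/2+j-1)=-2(1-m/2-j)$. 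After collecting terms and factoring out $d_m^{-m/2-j}$, the bracket reduces, using $d_m+x_mh_m-h_m^2|x|_m^2=1-x_mh_m$, so that the first term contributes exactly $d_m^{-m/2-j}(1-x_mh_m)$.

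The second term is merely an index shift: since $F^{(-1)}_{m,j+1}=0$, one has $\sum_{k_m\ge 0}F^{(k_m-1)}_{m,j+1}h_m^{k_m}=h_m\sum_{l\ge 0}F^{(l)}_{m,j+1}h_m^{l}=h_m d_m^{-m/2-j}$ by Lemma \ref{GF_F} applied with $j+1$, so this term contributes $h_m d_m^{-m/2-j}\,\pod x e_m$. Adding the two contributions and factoring out $d_m^{-m/2-j}$ leaves the bracket $1-x_mh_m+h_m\,\pod x e_m$. Finally I would invoke the identification $x=\pod x+x_me_m$ together with $e_m^2=-1$: then $xh_me_m=h_m\,\pod x e_m+h_mx_me_m^2=h_m\,\pod x e_m-x_mh_m$, so the bracket equals $1+xh_me_m$, yielding the claimed formula.

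The computation is essentially routine once Lemma \ref{GF_F} is in hand; the only points requiring a little care are the termwise differentiation $\sum_{k_m} k_m(\cdots)h_m^{k_m}=h_m\pa_{h_m}\sum_{k_m}(\cdots)h_m^{k_m}$, which is legitimate inside the radius of convergence $|h_m|<1$ of the Gegenbauer series, the arithmetic cancellation producing the factor $-1$, and the closing Clifford-algebra identity. I do not expect any genuine obstacle beyond this bookkeeping.
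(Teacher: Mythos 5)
Your proof is correct and follows essentially the same route as the paper's: the same splitting of the coefficient $\frac{m-2+k_m+2j}{m-2+2j}=1+\frac{k_m}{2\nu}$ with the $k_m$-weighted sum recognized as $h_m\pa_{h_m}$ applied to the Gegenbauer generating function of Lemma \ref{GF_F}, the same index shift for the second term, and the same closing identity $1-x_mh_m+h_m\,\pod x e_m=1+xh_me_m$. No gaps.
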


\begin{proof}
Put $\nu=m/2-1+j$.
By \eqref{EFMon}, the series we want to sum up is equal to
$$\sum_{k_m=0}^{\infty} \frac{k_m+2\nu}{2\nu}\;F^{({k_m})}_{m,j}(x) h_m^{k_m}+
\sum_{k_m=1}^{\infty} F^{({k_m}-1)}_{m,j+1}(x)h_m^{k_m}\;\pod x e_m=\Sigma_1+\Sigma_2.$$
Obviously, by Lemma \ref{GF_F}, we get that
$$\Sigma_2=\frac{\pod x h_me_m}{(1-2x_m h_m+h^2_m|x|^2_m)^{\nu+1}}.$$
Moreover, using Lemma \ref{GF_F} again, we have that
$$\Sigma_1=\frac{h_m}{2\nu}\;\sum_{k_m=1}^{\infty} F^{({k_m})}_{m,j}(x) k_mh_m^{k_m-1}+\frac{1}{(1-2x_m h_m+h^2_m|x|^2_m)^{\nu}}$$
and hence
$$\Sigma_1=\frac{h_m}{2\nu}\;\frac{d\ }{dh_m} \frac{1}{(1-2x_m h_m+h^2_m|x|^2_m)^{\nu}}+\frac{1}{(1-2x_m h_m+h^2_m|x|^2_m)^{\nu}},$$
which gives
$$\Sigma_1=\frac{1-x_mh_m}{(1-2x_m h_m+h^2_m|x|^2_m)^{\nu+1}}.$$
Finally, we conclude that
$$\Sigma_1+\Sigma_2=\frac{1+xh_me_m}{(1-2x_m h_m+h^2_m|x|^2_m)^{m/2+j}},$$
which finishes the proof.
\end{proof}

Now we can prove basic properties of the generating functions $M_m$ quite similarly as in the harmonic case if, in this case, we use Lemma \ref{GF_X} instead of Lemma~\ref{GF_F}. Then we obtain the following result.

\begin{thm}\label{GF_mon} For each $m\geq 2$ there is a~neighborhood $U_m$ of $0$ in $\bR^{m-1}$ such that 
the following statements hold true.
\begin{itemize}
\item[(i)] The generating functions $M_m(x,h)$ are defined if $|x|_m\leq 1$ and $h\in U_m$.
\item[(ii)] For each $k\in \bN_0^{m-1}$, we have that 
$$mon_{k}(x)=\frac{1}{k !}\;\pa^{k} M_m(x,h)|_{h=0},\ \ |x|_m\leq 1$$
where $k !=(k_2!)\cdots(k_m!)$ and $\pa^{k}=\pa^{k_2}_{h_2}\cdots\pa^{k_m}_{h_m}$.
\item[(iii)] For $m\geq 3$, $|x|_m\leq 1$ and $h\in U_m$, we have that 
$$M_m(x,h)= (1+xh_me_m)\; d_m^{-\frac{m}{2}} M_{m-1}(\pod x,\pod h /d_m)$$
where $d_m= 1-2x_m h_m+h^2_m|x|^2_m,$ $\pod x=(x_1, \cdots, x_{m-1})$ and $\pod h /d_m=(h_2/d_m, \cdots, h_{m-1}/d_m).$
\end{itemize}
\end{thm}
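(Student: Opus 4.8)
The plan is to mirror the proof of Theorem~\ref{GF_harm} almost verbatim, replacing Lemma~\ref{GF_F} by Lemma~\ref{GF_X} wherever the summation over $k_m$ is carried out. I would proceed by induction on $m$. The base case $m=2$ is already established by the explicit computation $M_2(x_1,x_2,h_2)=\exp((x_1-e_{12}x_2)h_2)$, which converges for all $h_2$, so statements (i) and (ii) hold trivially there and (iii) is vacuous. For the inductive step, I assume the theorem holds for $M_{m-1}$, so that $M_{m-1}(\pod x,\pod h)$ is defined and equals its absolutely convergent power series for $\pod h\in U_{m-1}=(-\delta_2,\delta_2)\times\cdots\times(-\delta_{m-1},\delta_{m-1})$ and $|\pod x|_{m-1}\leq 1$.

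The key computation is to isolate the $k_m$-summation. Exactly as in \eqref{Hrecur}, the ordering of the Clifford factors in \eqref{GTMon} means the leftmost factor $X^{(k_m)}_{m,k^\ast_{m-1}}$ can be pulled to the front, giving
\begin{equation*}
M_m(x,h)=\sum_{\pod k}\left(\sum_{k_m=0}^{\infty} X^{(k_m)}_{m,k_{m-1}^\ast}(x)\; h_m^{k_m}\right) mon_{\pod k}(\pod x)\; \pod h^{\pod k},
\end{equation*}
where $\pod k=(k_2,\cdots,k_{m-1})$ and $k^\ast_{m-1}=k_2+\cdots+k_{m-1}$. Here I must be careful that $\pod x e_m$ appearing inside $X^{(k_m)}_{m,j}$ does not commute with the $mon_{\pod k}(\pod x)$ factors, but since the inner sum sits to the \emph{left} of $mon_{\pod k}(\pod x)$ the factorization of the product is legitimate without any reordering. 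Applying Lemma~\ref{GF_X} with $j=k^\ast_{m-1}$ yields
\begin{equation*}
\sum_{k_m=0}^{\infty} X^{(k_m)}_{m,k_{m-1}^\ast}(x)\; h_m^{k_m}=(1+xh_me_m)\; d_m^{-\frac{m}{2}-(k_2+\cdots+k_{m-1})}
\end{equation*}
for $|h_m|<1$. Since the factor $(1+xh_me_m)d_m^{-m/2}$ is independent of $\pod k$, it pulls out of the $\pod k$-sum on the left, and the remaining $d_m^{-(k_2+\cdots+k_{m-1})}$ recombines with $\pod h^{\pod k}$ to produce $(\pod h/d_m)^{\pod k}$, so that
\begin{equation*}
M_m(x,h)=(1+xh_me_m)\; d_m^{-\frac{m}{2}}\sum_{\pod k} mon_{\pod k}(\pod x)\;(\pod h/d_m)^{\pod k}=(1+xh_me_m)\; d_m^{-\frac{m}{2}} M_{m-1}(\pod x,\pod h/d_m),
\end{equation*}
which is precisely (iii).

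The convergence bookkeeping is identical to the harmonic case and causes no real difficulty: choosing $U_m=(-\delta_2/4,\delta_2/4)\times\cdots\times(-\delta_{m-1}/4,\delta_{m-1}/4)\times(-1/2,1/2)$ and using $d_m\geq(1-h_m|x|_m)^2>1/4$ for $|h_m|<1/2$, one checks that $\pod h/d_m\in U_{m-1}$ whenever $h\in U_m$, so a rearrangement of the series converges at $h$, and Abel's Lemma then gives absolute convergence on all of $U_m$; this establishes (i), and (ii) follows since $mon_k$ is by definition the $h^k$-Taylor coefficient of the absolutely convergent series. I expect the only genuine subtlety—and hence the main point requiring care rather than the main obstacle—to be verifying that the non-commuting factor $(1+xh_me_m)$ really does pull out cleanly on the left; once the left-ordering of the product in \eqref{GTMon} is observed, the argument reduces to the scalar case already handled in Theorem~\ref{GF_harm}.
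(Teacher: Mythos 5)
Your proof is correct and follows exactly the route the paper intends: the paper itself gives no separate argument for Theorem~\ref{GF_mon}, stating only that it is proved ``quite similarly as in the harmonic case'' with Lemma~\ref{GF_X} in place of Lemma~\ref{GF_F}, and your write-up fills in precisely that induction, including the (correct) observation that the left-ordering of the factors in \eqref{GTMon} lets $(1+xh_me_m)d_m^{-m/2}$ be pulled out on the left despite non-commutativity. The convergence bookkeeping via Abel's Lemma is likewise identical to the paper's argument for Theorem~\ref{GF_harm}.
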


Using the recurrence formula (iii) of Theorem \ref{GF_mon}, we can find closed formul\ae\ of generating functions for spherical monogenics in $\bR^m$
by induction on the dimension $m$.

\begin{cor} In particular, we have the following formula 
$$M_3(x_1,x_2,x_3,h_2,h_3)=\frac{1+xh_3e_3}{(1-2x_3 h_3+h^2_3|x|^2_3)^{3/2}}\exp\left(\frac{(x_1-e_{12}x_2)h_2}{1-2x_3 h_3+h^2_3|x|^2_3}\right).$$
\end{cor}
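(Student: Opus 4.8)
The plan is to specialize the recurrence formula (iii) of Theorem~\ref{GF_mon} to the single case $m=3$ and then insert the already-known base case $M_2$; no genuine induction is needed here, since $M_3$ sits just one step above $M_2$. For $m=3$ the recurrence reads
$$M_3(x,h)=(1+xh_3e_3)\,d_3^{-3/2}\,M_2(\pod x,\pod h /d_3),$$
where $d_3=1-2x_3h_3+h_3^2|x|_3^2$, the Clifford vector in the prefactor is the full $x=x_1e_1+x_2e_2+x_3e_3$, while $\pod x=(x_1,x_2)$ and, since $h=(h_2,h_3)$ has $\pod h=(h_2)$, the argument $\pod h/d_3$ is simply the single scaled variable $h_2/d_3$.

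Next I would substitute the closed form of $M_2$ recorded just before Theorem~\ref{GF_mon}, namely $M_2(x_1,x_2,h_2)=\exp((x_1-e_{12} x_2)h_2)$. Evaluating this at the scaled argument $h_2/d_3$ gives
$$M_2(\pod x,\pod h /d_3)=\exp\left(\frac{(x_1-e_{12}x_2)h_2}{d_3}\right).$$
Multiplying on the left by the Clifford factor $1+xh_3e_3$ and by the scalar factor $d_3^{-3/2}$, and writing $d_3$ out as $1-2x_3h_3+h_3^2|x|_3^2$, yields exactly the asserted formula. The denominator is a genuine scalar, so it commutes freely with the Clifford-valued exponential and may be displayed under the prefactor as in the statement.

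There is no real obstacle here beyond bookkeeping, and I expect the only two points worth verifying to be the following. First, one must stay inside the neighborhood $U_3$ furnished by Theorem~\ref{GF_mon}, so that both the recurrence and the convergent series for $M_2$ are valid at the scaled point $h_2/d_3$; this holds on $U_3$ by the same estimate $d_3\geq(1-h_3|x|_3)^2>1/4$ used in the harmonic case, which keeps $\pod h/d_3$ in the domain of $M_2$. Second, the left placement of the prefactor $1+xh_3e_3$ relative to the exponential must be respected, reflecting the non-commutativity of Clifford multiplication and the left-to-right ordering of the embedding factors in the definition of $mon_k$. With these checked, the identity follows immediately.
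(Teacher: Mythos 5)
Your proposal is correct and follows essentially the same route as the paper, which presents the corollary as an immediate specialization of Theorem~\ref{GF_mon}(iii) at $m=3$ combined with the closed form $M_2(x_1,x_2,h_2)=\exp((x_1-e_{12}x_2)h_2)$. Your extra care about the domain $U_3$ (via the estimate $d_3\geq(1-h_3|x|_3)^2>1/4$) and the left placement of the Clifford prefactor $1+xh_3e_3$ is consistent with, and implicit in, the paper's argument.
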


\begin{rem} If one replaces in the definition of the orthogonal basis \eqref{GTMon} the polynomials 
$mon_{k_2}(x_1,x_2)=(x_1-e_{12} x_2)^{k_2}/({k_2}!)$ with 
\begin{equation}\label{GTMon2nad}
\nad{mon}_{k_2}(x_1,x_2)=(x_1-e_{12} x_2)^{k_2}, 
\end{equation} 
the corresponding generating functions $\nad M_m$ are different from $M_m$ but they obviously satisfy again Theorem \ref{GF_mon}.
In particular, we have that
$$\nad M_2(x_1,x_2,h_2)=\sum_{k_2=0}^{\infty} (x_1-e_{12} x_2)^{k_2} h_2^{k_2}=\frac{1-(x_1+e_{12} x_2)h_2}{1-2x_1 h_2+h^2_2|x|^2_2}.$$
\end{rem}

\subsection*{Acknowledgements} 
We would like to thank V. Sou\v cek for useful discussions on this topic.
The work of the first and second authors was supported by Portuguese funds through the CIDMA - Center for Research and Development in Mathematics and Applications, and the Portuguese Foundation for Science and Technology(``FCT - Funda\c c\~ao para a Ci\^encia e a Tecnologia''), within project PEst-OE/MAT/UI4106/2014.


\end{document}